\theoremstyle{plain}
\newtheorem{theorem}{Theorem}[section]
\newtheorem{proposition}[theorem]{Proposition}
\begin{document}

\title{An Infinitary Generalization of the Brauer-Schur Theorem}
\author{Shahram Mohsenipour}

\address{School of Mathematics, Institute for Research in Fundamental Sciences (IPM)
        P. O. Box 19395-5746, Tehran, Iran}\email{mohseni@ipm.ir}


\subjclass [2010] {05D10}
\begin{abstract} We prove an infinitary version of the Brauer-Schur theorem.
\end{abstract}
\maketitle
\bibliographystyle{amsplain}
\section{Introduction}In \cite{thomasshahram}, the following infinitary generalization of van der Waerden's theorem is considered and proved.\\

{\em Let $2\leq k_1\leq k_2\leq k_3\leq\cdots$ be a sequence of positive integers. Then, for any finite coloring of $\mathbb{N}$, there exist positive integers $d_1,d_2,d_3,\dots$ and a color $\gamma$ such that for every $n\in\mathbb{N}$, there exists a positive integer $a_n$ such that the set
\[
\big{\{}a_n+j_1d_1+\cdots+j_nd_n|0\leq j_1\leq k_1-1,\dots,0\leq  j_n\leq k_n-1\big{\}}
\]
is monochromatic with the color $\gamma$.}\\

In \cite{infvdw}, we gave an elementary combinatorial proof. One may naturally ask whether in the above theorem, we can include the set $\{d_1,d_2,d_3,\dots\}$ in the monochromatic set. For $n=1$, this is Schur's extension of van der Waerden's theorem, which we call it here, the Brauer-Schur theorem (See Section 2.4 of Pr\"{o}mel's book \cite{promel} for some historical remarks). For finite case, namely general fixed $n$, this is actually an special case of Deuber's $(m,n,p)$ theorem. In \cite{thomasshahram}, we gave a simpler proof by direct generalization of the proof for the case $n=1$. In this paper, we show that the infinite case also holds true. More precisely, we prove\\

{\em \textbf{Main Theorem}. Let $2\leq k_1\leq k_2\leq k_3\leq\cdots$ be a sequence of positive integers. Then, for any finite coloring of $\mathbb{N}$, there exist positive integers $d_1,d_2,d_3,\dots$ and a color $\gamma$ such that for every $n\in\mathbb{N}$, there exists a positive integer $a_n$ such that the sets
\[
\big{\{}a_n+j_1d_1+\cdots+j_nd_n|0\leq j_1\leq k_1-1,\dots,0\leq  j_n\leq k_n-1\big{\}}
\]
as well as
\[
\{d_1,d_2,d_3,\dots\}
\]
are all monochromatic with the color $\gamma$.} \\

Our proof is elementary and combinatorial and consists of two steps. In Section 2, we first generalize our previous theorem from \cite{infvdw} (Theorem 3.3) and then in Section 3, we prove Theorem \ref{inductive}, from which our Main Theorem can easily be derived by decreasing induction on the number of colors. Now, we explain some notation and conventions. $\mathbb{N}$ denotes the set of positive integers. Let $c$ be a positive integer, by a $c$-coloring, we mean a coloring with $c$ many colors and furthermore, the set of the colors is $[c]=\{1,\dots,c\}$. If $b$ is a non-negative integer and $A$ is a set of integers, then $\mathbb{T}^b A$ will denote the set $\{a+b|a\in A\}$. For positive integers $k\geq 2$, and $c\geq 1$, $\textbf{W}(k,c)$ will denote the van der Waerden number which is the least integer $n$ such that for any $c$-coloring of $[n]$, there exists a monochromatic arithmetic progression of length $k$. For two finite subsets $A,B$ of $\mathbb{N}$, by $A<B$, we mean that every element of $A$ is less than every element of $B$. If $A=\{a_1,a_2,\dots,a_n\}$ with $a_1<a_2<\cdots<a_n$, then we may denote $a_t$ by $A[t]$. Also we use the abbreviation AP for the term ``arithmetic progression''.

\section{Generalized Infinitary van der Waerden's Theorem}
The following proposition is in fact a generalization of Proposition 3.1 in \cite{infvdw}.
\begin{proposition}\label{1}
Let $a\in\mathbb{N}$, $c\geq1$, $n\geq 1$, $2\leq k_1\leq k_2\leq\cdots\leq k_n$ and consider the following sequence $\langle W_i|i=0,\dots,n\rangle$ of the van der Waerden numbers with $W_0=1$ and $W_i={\bf W}(k_i,c^{W_1\cdots W_{i-1}})$ for $i=1,\dots,n$. Let $l_1,\dots,l_n$ be positive integers with $l_1>0$ and for $i=2,\dots,n$, we have $l_i>(W_1-1)l_1+\cdots+(W_{i-1}-1)l_{i-1}$. Let
\[
A_n=\big{\{}a+x_1l_1+\cdots+x_nl_n|0\leq x_1\leq W_1-1,\dots,0\leq x_n\leq W_n-1\big{\}}.
\]
Then, for any $c$-coloring of $A_n$, there exist positive integers $a_n$ and  $d_1,\dots,d_n$ with $d_i=\alpha_i l_i$ for some positive integer $\alpha_i$ with $1\leq\alpha_i\leq \lfloor\frac{W_i-1}{k_i-1}\rfloor$, for $i=1,\dots,n$, such that the following set
\[
P_n:=\big{\{}a_n+x_1d_1+\cdots+x_nd_n|0\leq x_1\leq k_1-1,\dots,0\leq x_n\leq k_n-1\big{\}}
\]
is a subset of $A_n$ and is monochromatic.
\end{proposition}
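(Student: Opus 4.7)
My plan is to proceed by induction on $n$.

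For the base case $n=1$, $A_1$ is an arithmetic progression of length $W_1=\mathbf{W}(k_1,c)$ with common difference $l_1$, so any $c$-coloring of $A_1$ contains a monochromatic sub-AP of length $k_1$ by definition of the van der Waerden number. Writing its common difference as $d_1=\alpha_1 l_1$, the constraint that the sub-AP fits inside $A_1$ forces $\alpha_1\leq \lfloor (W_1-1)/(k_1-1)\rfloor$.

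For the inductive step, assuming the proposition for $n-1$, I will slice $A_n$ along the $l_n$-direction. For each $b\in\{0,1,\dots,W_n-1\}$, let
\[
S_b=\{a+bl_n+x_1l_1+\cdots+x_{n-1}l_{n-1}\mid 0\leq x_i\leq W_i-1\}.
\]
The growth condition $l_n>(W_1-1)l_1+\cdots+(W_{n-1}-1)l_{n-1}$ forces $S_0<S_1<\cdots<S_{W_n-1}$, so the $S_b$ partition $A_n$, and each $S_b$ is a translate of $A_{n-1}$ (with the analogous growth conditions on $l_1,\dots,l_{n-1}$ still in force). Given a $c$-coloring of $A_n$, each slice $S_b$ thus acquires a ``color pattern'' drawn from at most $c^{W_1\cdots W_{n-1}}$ possibilities, so coloring the index $b$ by the pattern of $S_b$ produces a $c^{W_1\cdots W_{n-1}}$-coloring of $\{0,1,\dots,W_n-1\}$.

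Since $W_n=\mathbf{W}(k_n,c^{W_1\cdots W_{n-1}})$, this index coloring contains a monochromatic AP of length $k_n$, say $b,b+\alpha_n,\dots,b+(k_n-1)\alpha_n$ with $1\leq\alpha_n\leq\lfloor (W_n-1)/(k_n-1)\rfloor$. By construction, the slices $S_{b+j\alpha_n}$ for $j=0,\dots,k_n-1$ are colored identically up to translation by $j\alpha_n l_n$. Applying the inductive hypothesis to the $c$-colored slice $S_b$ yields $a_{n-1}$ and $d_1=\alpha_1 l_1,\dots,d_{n-1}=\alpha_{n-1}l_{n-1}$ with a monochromatic $P_{n-1}\subseteq S_b$; setting $a_n=a_{n-1}$ and $d_n=\alpha_n l_n$, the target set $P_n=\bigcup_{j=0}^{k_n-1}(P_{n-1}+jd_n)$ is monochromatic by pattern invariance and lies inside $A_n$.

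The main obstacle I anticipate is confirming that the color-pattern coloring on the indices $b$ is well-defined, which amounts to verifying that each element of $A_n$ has a unique representation in the form $a+\sum x_il_i$. This is precisely what the growth hypotheses on the $l_i$ are designed to guarantee, and the same hypotheses also ensure that the slices $S_b$ are disjoint and linearly ordered in the way required above; once this bookkeeping is in hand, the rest of the argument is a clean Gallai-style product of van der Waerden applications.
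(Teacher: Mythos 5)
Your proof is correct and follows essentially the same route as the paper: induct on $n$, decompose $A_n$ into $W_n$ ordered translates of $A_{n-1}$ (which the growth condition on $l_n$ keeps disjoint), color the translate indices by their color patterns with $c^{W_1\cdots W_{n-1}}$ colors, extract a monochromatic AP of identically-patterned slices via $W_n=\mathbf{W}(k_n,c^{W_1\cdots W_{n-1}})$, and apply the inductive hypothesis inside the first slice of that AP. The bookkeeping you flag (unique representation and the ordering $S_0<S_1<\cdots<S_{W_n-1}$) is exactly what the paper verifies in its preliminary observations, so nothing is missing.
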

\begin{proof}
Before going into the proof, we make some simple observations. Define the finite subsets $A_0,A_1,\dots,A_{n-1}$ of $A_n$ with $A_0=\{a\}$ and for $i=1,\dots,n-1$, we have
\[
A_i=\big{\{}a+x_1l_1+\cdots+x_il_i|0\leq x_1\leq W_1-1,\dots,0\leq x_i\leq W_i-1\big{\}}.
\]
First observe that for $i=1,\dots,n$, we can represent $A_i$ as
\[
A_i=A_{i-1}\cup\mathbb{T}^{l_i}A_{i-1}\cup\mathbb{T}^{2l_i}A_{i-1}\cup\cdots\cup\mathbb{T}^{(W_i-1)l_i}A_{i-1}.
\]
Also note that the condition $l_1>0$ implies that for $A_1=\{a, a+l_1,\dots,a+(W_1-1)l_1\}$, we have $a<a+l_1<\cdots<a+(W_1-1)l_1$, so $|A_1|=W_1$. Also it is easily seen that for $i=1\dots,n$, we have $\min A_i=a$ and $\max A_i=a+(W_1-1)l_1+\cdots+(W_i-1)l_i$, so $\min\mathbb{T}^{l_{i+1}}A_i=a+l_{i+1}$ and the condition $l_{i+1}>(W_1-1)l_1+\cdots+(W_i-1)l_i$, implies $A_i<\mathbb{T}^{l_{i+1}}A_i$ which is preserved under any translation by $\mathbb{T}^{l_{i+1}}$, thus, for $i=1,\dots,n$, we get
\[
A_{i-1}<\mathbb{T}^{l_i}A_{i-1}<\mathbb{T}^{2l_i}A_{i-1}<\cdots<\mathbb{T}^{(W_i-1)l_i}A_{i-1},
\]
from which it is easily deduced that $|A_i|=W_1\cdots W_i$. Now, we start the proof which is by induction. First, let $n=1$. We have $A_1=\{a, a+l_1,\dots,a+(W_1-1)l_1\}$ with $l_1>0$. Let $\chi\colon A_1\rightarrow[c]$ be a $c$-coloring. We define a corresponding $c$-coloring $\chi^{'}\colon\{0,1,\dots,W_1-1\}\rightarrow[c]$ as follows. For $0\leq i<j\leq W_1-1$, we set $\chi^{'}(i)=\chi^{'}(j)$ iff $\chi(a+il_1)=\chi(a+jl_1)$. Recall the definition of $W_1={\bf W}(k_1,c^{W_0})={\bf W}(k_1,c)$, so by van der Waerden's theorem, there exist $\chi^{'}$-monochromatic AP of length $k_1$ in $\{0,1,\dots,W_1-1\}$, say, $b_1<\cdots<b_{k_1}$. This means that $a+b_1l_1,a+b_2l_1\dots,a+b_{k_1}l_1$ have the same $\chi$-colors. Let $d^*$ be the common difference $d^*:= b_2-b_1=\cdots=b_{k_1}-b_{k_1-1}$. Obviously $1\leq d^*\leq \lfloor\frac{W_1-1}{k_1-1}\rfloor$ and we can rewrite the above $\chi$-monochromatic sequence as $a+b_1l_1, a+b_1l_1+d^*l_1,\ldots,a+b_1l_1+(k_1-1)d^*l_1$. Now, put $a_1:=a+b_1l_1$, $d_1:=d^*l_1$ and $\alpha_1=d^*$. So, the case $n=1$ is done.

Now, suppose that $n\geq1$ and the assertion is true for $n$, we prove it for $n+1$.
Let $\chi\colon A_{n+1}\rightarrow[c]$ be a $c$-coloring. Recall that
\[
A_{n+1}=A_{n}\cup\mathbb{T}^{l_{n+1}}A_{n}\cup\mathbb{T}^{2l_{n+1}}A_{n}\cup\cdots\cup\mathbb{T}^{(W_{n+1}-1)l_{n+1}}A_{n}
\]
as well as
\[
A_{n}<\mathbb{T}^{l_{n+1}}A_{n}<\mathbb{T}^{2l_{n+1}}A_{n}<\cdots<\mathbb{T}^{(W_{n+1}-1)l_{n+1}}A_{n},
\]
as well as
\[
|A_{n}|=|\mathbb{T}^{l_{n+1}}A_{n}|=|\mathbb{T}^{2l_{n+1}}A_{n}|=\cdots=|\mathbb{T}^{(W_{n+1}-1)l_{n+1}}A_{n}|=W_1\cdots W_n.
\]
We define the corresponding coloring $\chi^{'}\colon\{0,1,\dots,W_{n+1}-1\}\rightarrow c^{W_1\cdots W_n}$ as follows. For $0\leq i<j\leq W_{n+1}-1$, we set $\chi^{'}(i)=\chi^{'}(j)$ iff for every $1\leq t\leq W_1\cdots W_n$, we have
\[
\chi(\mathbb{T}^{il_{n+1}}A_n[t])=\chi(\mathbb{T}^{jl_{n+1}}A_n[t]).
\]
Having $W_{n+1}={\bf W}(k_{n+1},c^{W_1\cdots W_n})$, we can use van der Waerden's theorem to get a $\chi^{'}$-monochromatic AP of length $k_{n+1}$ in $\{0,1,\dots,W_{n+1}-1\}$, which we denote it by $b_1<b_2<\cdots<b_{k_{n+1}}$. Let $d^*$ be the common difference $d^*:= b_2-b_1=\cdots=b_{k_{n+1}}-b_{k_{n+1} -1}$. This means that for $1\leq i<j\leq k_{n+1}$ and $1\leq t\leq W_1\cdots W_n$, we have $\chi(\mathbb{T}^{b_il_{n+1}}A_n[t])=\chi(\mathbb{T}^{b_jl_{n+1}}A_n[t])$. By a simple computation, for $i=1,\dots,k_{n+1}$, we have
\[
\mathbb{T}^{b_il_{n+1}}A_n[t]=\mathbb{T}^{b_1l_{n+1}}A_n[t]+(b_i-b_1)l_{n+1}=\mathbb{T}^{b_1l_{n+1}}A_n[t]+(i-1)d^*l_{n+1}.
\]
Thus, we obtain that for every $1\leq t\leq W_1\cdots W_n$ and every $0\leq x_{n+1}\leq k_{n+1}-1$, we have
\[
\chi(\mathbb{T}^{b_il_{n+1}}A_n[t])=\chi(\mathbb{T}^{b_1l_{n+1}}A_n[t]+x_{n+1}d^*l_{n+1}).
\]
Now, we turn our attention to $\mathbb{T}^{b_1l_{n+1}}A_n$. In fact, $\mathbb{T}^{b_1l_{n+1}}A_n$ is represented as
\[
\big{\{}(b_1l_{n+1}+a)+x_1d_1+\cdots+x_nd_n|0\leq x_1\leq W_1-1,\dots,0\leq x_n\leq W_n-1\big{\}},
\]
therefore, we can use the induction hypothesis for it, namely, there exist $a_n,d_1,\dots,d_n$ with $d_i=\alpha_i l_i$ and $1\leq\alpha_i\leq \lfloor\frac{W_i-1}{k_i-1}\rfloor$ for $i=1,\dots,n$, such that the set
\[
\big{\{}a_n+x_1d_1+\cdots+x_nd_n|0\leq x_1\leq k_1-1,\dots,0\leq x_n\leq k_n-1\big{\}}
\]
is $\chi$-monochromatic and is included in $\mathbb{T}^{b_1l_{n+1}}A_n$. Now, we set $d_{n+1}=d^*l_{n+1}$ and define $P_{n+1}$ as
\[
\big{\{}a_n+x_1d_1+\cdots+x_{n+1}d_{n+1}|0\leq x_1\leq k_1-1,\dots,0\leq x_{n+1}\leq k_{n+1}-1\big{\}},
\]
which is $\chi$-monochromatic and is included in
\[
\mathbb{T}^{b_1l_{n+1}}A_n\cup\mathbb{T}^{b_2l_{n+1}}A_n\cup\cdots\cup\mathbb{T}^{b_{k_{n+1}}l_{n+1}}A_n
\]
which is included in $A_{n+1}$. Also it is clear that $1\leq d^*\leq \lfloor\frac{W_{n+1}-1}{k_{n+1}-1}\rfloor$, thus, we set $d^*=\alpha_{n+1}$. This completes the proof.
\end{proof}

Now, we are ready to state and prove our generalized infinitary van der Waerden's theorem.

\begin{theorem}\label{2}
Let $a\in\mathbb{N}$, $c\geq1$, and let $\langle k_i,i\in\mathbb{N}\rangle$ be a sequence of positive integers with $2\leq k_1\leq k_2\leq\cdots$ and consider the following sequence $\langle W_i|i\in\mathbb{N}\cup\{0\}\rangle$ of the van der Waerden numbers with $W_0=1$ and $W_i={\bf W}(k_i,c^{W_1\cdots W_{i-1}})$ for $i\in\mathbb{N}$. Let $l_1,l_2,\dots$ be positive integers with $l_1>0$ and for $i\geq2$, we have $l_i>(W_1-1)l_1+\cdots+(W_{i-1}-1)l_{i-1}$. Define the finite subsets $A_0,A_1,A_2\dots$, in such a way that $A_0=\{a\}$ and for $i\in\mathbb{N}$, we have
\[
A_i=\big{\{}a+x_1l_1+\cdots+x_il_i|0\leq x_1\leq W_1-1,\dots,0\leq x_i\leq W_i-1\big{\}}.
\]
Then, for any $c$-coloring of $A=\bigcup_{i\in\mathbb{N}}A_i$, there exist positive integers $a_i, d_i, i\in\mathbb{N}$ with $d_i=\alpha_i l_i$ for some positive integer $\alpha_i$ with $1\leq\alpha_i\leq \lfloor\frac{W_i-1}{k_i-1}\rfloor$, such that the following subsets
\[
\big{\{}a_i+x_1d_1+\cdots+x_id_i|0\leq x_1\leq k_1-1,\dots,0\leq x_i\leq k_i-1\big{\}}
\]
are all monochromatic with the same color.
\end{theorem}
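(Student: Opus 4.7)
The plan is to derive Theorem \ref{2} from Proposition \ref{1} by a standard iterated pigeonhole (compactness) argument. First, for each $n\in\mathbb{N}$, I would apply Proposition \ref{1} to the restriction of the given $c$-coloring $\chi$ to the finite set $A_n$. This produces a color $\gamma_n\in[c]$, coefficients $\alpha_1^{(n)},\dots,\alpha_n^{(n)}$ with $1\le\alpha_i^{(n)}\le\lfloor\frac{W_i-1}{k_i-1}\rfloor$, and a base point $a_n^{(n)}$ such that the grid
\[
P_n=\{a_n^{(n)}+x_1\alpha_1^{(n)}l_1+\cdots+x_n\alpha_n^{(n)}l_n : 0\le x_j\le k_j-1\}
\]
is $\chi$-monochromatic of color $\gamma_n$ and lies inside $A_n$.

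Next, I would stabilize the finitely many choices via iterated pigeonhole. Since $[c]$ is finite, pick $\gamma\in[c]$ and an infinite $S_0\subseteq\mathbb{N}$ on which $\gamma_n=\gamma$. Then, inductively, for each $i\ge 1$, the value $\alpha_i^{(n)}$ lies in the finite set $\{1,\dots,\lfloor\frac{W_i-1}{k_i-1}\rfloor\}$ (for every $n\ge i$), so one may choose $\alpha_i$ and an infinite $S_i\subseteq\{n\in S_{i-1}:n\ge i\}$ on which $\alpha_i^{(n)}=\alpha_i$. This yields a nested chain $S_0\supseteq S_1\supseteq S_2\supseteq\cdots$ of infinite sets and a single sequence $\alpha_1,\alpha_2,\dots$ meeting the required bounds, with the property that whenever $n\in S_i$, the first $i$ coefficients produced by Proposition \ref{1} coincide with $\alpha_1,\dots,\alpha_i$ and the color is $\gamma$.

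Finally, set $d_i=\alpha_i l_i$ for every $i$. For each fixed $i$, choose any $n\in S_i$ (so $n\ge i$) and define $a_i=a_n^{(n)}$. Specialising $x_{i+1}=\cdots=x_n=0$ in $P_n$ shows that
\[
\{a_i+x_1d_1+\cdots+x_id_i : 0\le x_j\le k_j-1\}
\]
is a subset of $P_n$, hence $\chi$-monochromatic of color $\gamma$. Since $\gamma$ does not depend on $i$, all these grids share the same color, as required. The main obstacle is purely organisational: one must produce a \emph{single} sequence $(d_i)_{i\in\mathbb{N}}$ that works uniformly for every level $i$, and this is precisely what the nested thinning $S_0\supseteq S_1\supseteq\cdots$ enforces. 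Beyond this bookkeeping, no Ramsey-theoretic input is needed past Proposition \ref{1}.
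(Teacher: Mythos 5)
Your proposal is correct and follows essentially the same route as the paper: apply Proposition \ref{1} to each finite level $A_n$, then use an iterated pigeonhole argument to stabilize the color and the coefficients $\alpha_i$ along a nested family of infinite index sets (the paper phrases this as a diagonal subsequence $n_1<n_2<\cdots$), and finally read off $a_i$ and $d_i=\alpha_i l_i$ by restricting the later coordinates to zero. No substantive difference from the paper's argument.
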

\begin{proof}
Let $\chi\colon A\rightarrow[c]$ be a $c$-coloring. By Proposition \ref{1}, for each positive integer $m\in\mathbb{N}$, there exist $b_m$ and $d_1^m,d_2^m,\dots,d_m^{m}$ such that
\[
P_m=\big{\{}b_m+x_1d_1^m+\cdots+x_md_m^{m}|0\leq x_1\leq k_1-1,\dots,0\leq x_m\leq k_m-1\big{\}}
\]
is $\chi$-monochromatic and for $i=1,\dots,m$, we have  $d_i^m=\alpha_i^m l_m$ for some positive integer $\alpha_i^m$ with $1\leq\alpha_i^m\leq\lfloor\frac{W_m-1}{k_m-1}\rfloor$. This means that for any positive integer $i$, all $d_i^i,d_i^{i+1},d_i^{i+2},d_i^{i+3},\dots$ are bounded by $(W_i-1)l_i$. Now by a simple iterated use of pigeonhole argument, we obtain a color $\gamma\in[c]$ and a strictly increasing sequence of positive integers $n_1<n_2<n_3<n_4<\cdots$ such that $P_{n_1},P_{n_2},P_{n_3},P_{n_4},\dots$ are all monochromatic with color $\gamma$ and also we have
\begin{eqnarray*}
                      d_1^{n_1}=d_1^{n_2}=d_1^{n_3}=d_1^{n_4}=&\cdots&\\
                                d_2^{n_2}=d_2^{n_3}=d_2^{n_4}=&\cdots&\\
                                          d_3^{n_3}=d_3^{n_4}=&\cdots&\\
                                                    d_4^{n_4}=&\cdots&\\
                                                              &\cdots&
\end{eqnarray*}
Now, for $i\in\mathbb{N}$, we put $a_i:=b_{n_i}$ and $d_i:=d_i^{n_i}$, which finishes the proof.
\end{proof}

We now introduce a very useful notation. Let for $i\in\mathbb{N}$, $a_i,d_i$ be positive integers. Suppose also that for positive integers $k_i,i\in\mathbb{N}$, we have $2\leq k_1\leq k_2\leq\dots$. For $i\in\mathbb{N}$, set
\[
A_i=\big{\{}a_i+x_1 d_1+\cdots+x_i d_i|0\leq x_1\leq k_1-1,\dots,0\leq x_i\leq k_i-1\big{\}},
\]
and set $A=\bigcup_{i\in\mathbb{N}}A_i$. We denote $A$ as a triple of sequences, from which $A$ and $A_i$ can be reproduced in the obvious way:
\[\begin{cases}
A^1=\langle a_1,a_2,a_3,\dots\rangle,\\
A^2=\langle d_1,d_2,d_3,\dots\rangle,\\
A^3=\langle k_1,k_2,k_3,\dots\rangle.
\end{cases}
\]
Then, we can reformulate Theorem \ref{2} as follows.\\

\noindent\textbf{Theorem 2.2.} {\em Let $a, c, k_i, W_i, l_i, i\in\mathbb{N}$, satisfying the conditions of Theorem \ref{2}. Suppose that $\chi\colon A\rightarrow[c]$ is a $c$-coloring, where $A$ is given by
\[
\begin{cases}
A^1=\langle a,a,a,\dots\rangle,\\
A^2=\langle l_1,l_2,l_3,\dots\rangle,\\
A^3=\langle W_1,W_2,W_3,\dots\rangle.
\end{cases}
\]
Then, for $i\in\mathbb{N}$, there exist positive integers $a_i, d_i$, with $d_i=\alpha_i l_i$ for some positive integer $\alpha_i$ with $1\leq\alpha_i\leq \lfloor\frac{W_i-1}{k_i-1}\rfloor$ such that $B\subseteq A$ and $B$ is $\chi$-monochromatic, where $B$ is given by the triplet
\[
\begin{cases}
B^1=\langle a_1,a_2,a_3,\dots\rangle,\\
B^2=\langle d_1,d_2,d_3,\dots\rangle,\\
B^3=\langle k_1,k_2,k_3,\dots\rangle.
\end{cases}
\]}

\section{Infinitary Brauer-Schur Theorem}


The proof of Main Theorem, as in the case of the classical Brauer-Schur theorem and also our previous finite generalization of it, proceeds by induction on the number of colors. In fact, the following theorem allows us to do a decreasing induction on the number of colors. We leave it to the reader to observe that Main Theorem is easily implied.

\begin{theorem}\label{inductive}
Let $c\geq1$, and $a\in\mathbb{N}$ and let $\langle k_i,i\in\mathbb{N}\rangle$ be an \textsf{unbounded} sequence of positive integers with $2\leq k_1\leq k_2\leq\cdots$ and consider the following sequence $\langle W_i|i\in\mathbb{N}\cup\{0\}\rangle$ of the van der Waerden numbers according to the square sequence $k_1^2\leq k_2^2\leq\cdots$, where $W_0=1$ and $W_i={\bf W}(k_i^2,c^{W_1\cdots W_{i-1}})$ for $i\in\mathbb{N}$. Suppose that $l_1,\dots,l_n,\dots$ are positive integers with $l_1>0$ and for $i\geq2$, we have $l_i>(W_1-1)l_1+\cdots+(W_{i-1}-1)l_{i-1}$. Also Suppose that $\chi\colon A\cup B\rightarrow[c]$ is a $c$-coloring, where $A,B$ are given by
\[
\begin{cases}
A^1=\langle a,a,a,\dots\rangle,\\
A^2=\langle l_1,l_2,l_3,\dots\rangle,\\
A^3=\langle W_1,W_2,W_3,\dots\rangle,
\end{cases}
\begin{cases}
B^1=\langle 0,0,0,\dots\rangle,\\
B^2=\langle l_1,l_2,l_3,\dots\rangle,\\
B^3=\langle W_1,W_2,W_3,\dots\rangle.
\end{cases}
\]
Then, $\underline{either}$ there exist positive integers $a_i,d_i,i\in\mathbb{N}$ such that $C\subseteq A$, $\{d_i|i\in\mathbb{N}\}\subseteq B$ and $C\cup\{d_i|i\in\mathbb{N}\}$ is $\chi$-monochromatic with a color, say $c$, where $C$ is given by
\[
\begin{cases}
C^1=\langle a_1,a_2,a_3,\dots\rangle,\\
C^2=\langle d_1,d_2,d_3,\dots\rangle,\\
C^3=\langle k_1,k_2,k_3,\dots\rangle,
\end{cases}
\]
$\underline{or}$ there exist positive integers $b,l_i^*,i\in\mathbb{N}$ with $l_1^*>0$ and for $i\geq2$, we have
\[
l_i^*>(W_1-1)l_1^*+\cdots+(W_{i-1}-1)l_{i-1}^*,
\]
such that for $D,E$ defined below
\[
\begin{cases}
D^1=\langle b,b,b,\dots\rangle,\\
D^2=\langle l_1^*,l_2^*,l_3^*,\dots\rangle,\\
D^3=\langle W_1,W_2,W_3,\dots\rangle,
\end{cases}
\begin{cases}
E^1=\langle 0,0,0,\dots\rangle,\\
E^2=\langle l_1^*,l_2^*,l_3^*,\dots\rangle,\\
E^3=\langle W_1,W_2,W_3,\dots\rangle,
\end{cases}
\]
we have $D\cup E\subseteq B$ and the range of the restriction of $\chi$ to $D\cup E$ is a subset of $[c-1]$.
\end{theorem}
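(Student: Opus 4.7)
The plan is to apply the reformulated Theorem~\ref{2} to $\chi|_A$ with target sizes $\langle k_i^2\rangle$---which is exactly what the choice $W_i=\mathbf{W}(k_i^2,c^{W_1\cdots W_{i-1}})$ permits---and then to dichotomize on the $\chi$-color of the potential common differences viewed as elements of $B$.

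Concretely, I would first invoke Theorem~\ref{2} on $\chi|_A$ to obtain a set $C'\subseteq A$ that is $\chi$-monochromatic in some color $\gamma\in[c]$, with triplet $(a_i,\tilde d_i,k_i^2)$ where $\tilde d_i=\tilde\alpha_i l_i$ for some $\tilde\alpha_i$ with $1\le\tilde\alpha_i\le\lfloor(W_i-1)/(k_i^2-1)\rfloor$. A direct computation then shows that every multiple $y\tilde d_i$ with $y\in[1,k_i+1]$ lies in $B$, since $y\tilde\alpha_i\le(W_i-1)/(k_i-1)\le W_i-1$.

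Second, I would split according to whether, for every index $i$, some $y_i\in[1,k_i+1]$ satisfies $\chi(y_i\tilde d_i)=\gamma$. If this holds for all $i$, set $d_i:=y_i\tilde d_i$ and let $C$ be the sub-structure of $C'$ with triplet $(a_i,d_i,k_i)$; the inclusion $C\subseteq C'$ is forced by $y_i(k_i-1)\le(k_i+1)(k_i-1)=k_i^2-1$, so $C$ is $\gamma$-monochromatic, while $\{d_i:i\in\mathbb{N}\}\subseteq B$ is $\gamma$-colored by the choice of the $y_i$'s, giving the first alternative of the theorem.

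Otherwise there is some $i_0$ such that the entire set $\{y\tilde d_{i_0}:y\in[1,k_{i_0}+1]\}$ misses $\gamma$ and is therefore $(c-1)$-colored. Producing the second alternative from this is the main obstacle I foresee: the failure of the first alternative supplies, a priori, only a one-dimensional $(c-1)$-colored progression of length $k_{i_0}+1$, whereas the second alternative demands a full infinite-dimensional structure $D\cup E\subseteq B$ with axes of length $W_1,W_2,\ldots$. I would try to exploit the assumed unboundedness of $\langle k_i\rangle$ to find such fibers at indices where $k_i$ dwarfs whatever $W_j$ is presently required, and then build the new increments $l_i^*$ as a diagonally chosen subsequence of the $l_j$'s, with $b$ selected compatibly, so that $D\cup E$ embeds in $B$---the growth condition $l_i^*>(W_1-1)l_1^*+\cdots+(W_{i-1}-1)l_{i-1}^*$ is automatic for such a subsequence, since $l_{n_j}>\sum_{r<n_j}(W_r-1)l_r\ge\sum_{i<j}(W_i-1)l_{n_i}$. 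The central difficulty, and where the paper's main combinatorial insight must enter, is ensuring that not only the one-dimensional axis fibers but all multidimensional combinations $\sum x_j l_j^*$ and $b+\sum x_j l_j^*$ can simultaneously be arranged to avoid $\gamma$, which I expect to need an iterated pigeonhole/Ramsey-type construction on top of the single application of Theorem~\ref{2}.
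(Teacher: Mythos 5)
Your first step (applying the reformulated Theorem \ref{2} with the squared lengths $k_i^2$) and your mechanism for the first alternative (absorbing a product $x_iy_i\leq(k_i-1)(k_i+1)=k_i^2-1$ into the $k_i^2$-cube) are exactly the right ideas and match the paper. But the dichotomy you split on is the wrong one, and this is a genuine gap, not just a detail: you test only whether each \emph{single} difference $\tilde d_i$ has some $\gamma$-colored multiple $y\tilde d_i$, so the failure of your Case 1 hands you only one index $i_0$ and one one-dimensional fiber of length $k_{i_0}+1$ avoiding $\gamma$. As you yourself observe, that is nowhere near enough to produce the infinite-dimensional $(c-1)$-colored structure $D\cup E$ demanded by the second alternative, and no amount of iterating your dichotomy over further indices fixes this, because mixed sums such as $y_1\tilde d_{i_1}+y_2\tilde d_{i_2}$ are never interrogated by your case split and could still carry the color $\gamma$.

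The missing idea is to formulate Case 1 about \emph{block sums} rather than single multiples: say Case 1 holds if there are infinitely many finite sets $I_1<I_2<I_3<\cdots$ and coefficients $0<x_i^*\leq k_i-1$ with $\chi\bigl(\sum_{i\in I_j}x_i^*\tilde d_i\bigr)=\gamma$ for every $j$; then $d_j:=\sum_{i\in I_j}x_i^*\tilde d_i$ still works for the first alternative (since $i\geq j$ for $i\in I_j$ gives $x_jx_i^*\leq(k_j-1)(k_i-1)\leq k_i^2-1$). The payoff is that the \emph{negation} of this condition is now strong: there is a $p$ such that \emph{every} nontrivial combination $\sum_{i\geq p}x_i\tilde d_i$ with $0\leq x_i\leq k_i-1$ avoids $\gamma$, i.e., an entire infinite-dimensional set is $[c-1]$-colored. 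From there the second alternative is obtained by setting $b=\tilde d_p$ and regrouping the tail differences into consecutive blocks $l_i^*=\tilde d_{m_i}+\cdots+\tilde d_{m_i+t_i}$, choosing $t_i$ to force $l_i^*>(W_1-1)l_1^*+\cdots+(W_{i-1}-1)l_{i-1}^*$ and choosing $m_i$ (via the unboundedness of $\langle k_i\rangle$, which is where that hypothesis is actually used) so that $k_{m_i}\geq W_i$, which lets each axis of $D\cup E$ have length $W_i$. Your proposed $l_i^*$ as a plain subsequence of the $l_j$'s would not land $D\cup E$ inside the $(c-1)$-colored set in any case; the increments must be built from the $\tilde d_i$ themselves.
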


\begin{proof}
For $c=1$, the theorem is trivial. We assume $c\geq2$. By Theorem \ref{2} (the reformulated version), there exist positive integers $b_1,b_2,b_3,\dots$ and $d_1^*,d_2^*,d_3^*,\dots$ with $C_1\subseteq A$ and $C_1$ is $\chi$-monochromatic with a color, say $c$, where $C_1$ is given by the triplet
\[
\begin{cases}
C_1^1=\langle b_1,b_2,b_3,\dots\rangle,\\
C_1^2=\langle d_1^*,d_2^*,d_3^*,\dots\rangle,\\
C_1^3=\langle k_1^2,k_2^2,k_3^2,\dots\rangle.
\end{cases}
\]
Note that for $i\in\mathbb{N}$, we have $d_i^*=\alpha_i l_i$ with $1\leq\alpha_i\leq \lfloor\frac{W_i-1}{k_i-1}\rfloor$. Hence, for $C_2$ given by
\[
\begin{cases}
C_2^1=\langle 0,0,0,\dots\rangle,\\
C_2^2=\langle d_1^*,d_2^*,d_3^*,\dots\rangle,\\
C_2^3=\langle k_1^2,k_2^2,k_3^2,\dots\rangle,
\end{cases}
\]
we also have $C_2\subseteq B$ and therefore, all of its elements receive a $\chi$-color.

There are two cases corresponding to the two cases of the theorem:

\underline{Case 1}: There exist infinitely many non-empty finite subsets $I_1<I_2<I_3<\cdots$ of $\mathbb{N}$, with the following property. For each $i\in\bigcup_{j\in\mathbb{N}} I_j$, there exists $x_i^*$ with $0< x_i^*\leq k_i-1$ such that
\[
\chi\big{(}\sum_{i\in I_1} x_i^*d_i^*\big{)}=c,\,\,\, \chi\big{(}\sum_{i\in I_2} x_i^*d_i^*\big{)}=c,\,\,\,\chi\big{(}\sum_{i\in I_3} x_i^*d_i^*\big{)}=c,\,\,\,\ldots.
\]
Notice that this is well-defined, since all $\sum_{i\in I_1} x_i^*d_i^*$, $\sum_{i\in I_2} x_i^*d_i^*$, $\sum_{i\in I_3} x_i^*d_i^*$, etc, are elements of $C_2$ and therefore receive a $\chi$-color.  Now, we define the desired $d_i,i\in\mathbb{N}$ as follows.
\[
d_1=\sum_{i\in I_1} x_i^*d_i^*,\,\,\,d_2=\sum_{i\in I_2} x_i^*d_i^*,\,\,\,d_3=\sum_{i\in I_3} x_i^*d_i^*,\,\,\,\ldots.
\]
For $i\in\mathbb{N}$, we put $\min I_i=n_i$ and $\max I_i=m_i$. Obviously for each $t\in\mathbb{N}$, we have $n_t\geq t$ and therefore, for each $i\in I_t$, we have $t\leq n_t\leq i$.
Now, we claim that for any integers $x_1,\dots,x_t$ with $0\leq x_1\leq k_1-1$,$\dots$, $0\leq x_t\leq k_t-1$, there are integers $y_i$ for $i\in I_1\cup\cdots\cup I_t$, with $0\leq y_i\leq k_i^2-1$ such that
\[
\sum_{i=1}^{t}x_id_i\,\,\,\,=\sum_{i\in I_1\cup\cdots\cup I_t}y_id_i^*.
\]
To see this, we have that
\begin{eqnarray*}
     \sum_{i=1}^{t}x_id_i&=&\sum_{i=1}^{t}x_i(\sum_{j\in I_i} x_j^*d_j^*)=\sum_{i=1}^{t}\sum_{j\in I_i}( x_ix_j^*)d_j^*\\
     &=&\sum_{j\in I_1}(x_1x_j^*)d_j^*+\sum_{j\in I_2}(x_2x_j^*)d_j^*+\cdots+\sum_{j\in I_t}(x_tx_j^*)d_j^*.
\end{eqnarray*}
Note that for $1\leq i\leq t$ and $j\in I_i$, we have $j\geq i$, thus the following inequalities hold.
\[
\forall j\in I_1: 0\leq x_1x_j^*\leq(k_1-1)(k_j-1)\leq (k_j^2-1),
\]
\[
\forall j\in I_2: 0\leq x_2x_j^*\leq(k_2-1)(k_j-1)\leq (k_j^2-1),
\]
\[
\dots\dots\dots
\]
\[
\forall j\in I_t: 0\leq x_tx_j^*\leq(k_t-1)(k_j-1)\leq (k_j^2-1).
\]
Now, for $j\in I_1$, we put $y_j=x_1x_j^*$ and for $j\in I_2$, we put $y_j=x_2x_j^*$, $\dots$, and for $j\in I_t$, we put $y_j=x_tx_j^*$. This proves the claim. Now, from $I_1<I_2<\cdots I_t$, we infer that $m_t=\max I_t$ is the maximum element of $I_1\cup\cdots\cup I_t$, so according to the claim, we have the following inclusions (recall $b_1,b_2,\dots$):
\[
\big{\{}b_{m_t}+x_1d_1+\cdots+x_td_t|0\leq x_1\leq k_1-1,\dots,0\leq x_t\leq k_t-1\big{\}}
\]
\[
\subseteq\big{\{}b_{m_t}+\sum_{i\in I_1\cup\cdots\cup I_t}y_id_i^*|0\leq y_i\leq k_i^2-1\big{\}}
\]
\[
\subseteq\big{\{}b_{m_t}+z_1d_1^*+\cdots+z_{m_t}d_{m_t}^*|0\leq z_1\leq k_1^2-1,\dots,0\leq z_{m_t}\leq k_{m_t}^2-1\big{\}}
\]
\[
\subseteq (C_1)_{m_t}.
\]
Now, put $a_1=b_{m_1}$,$a_2=b_{m_2}$,$a_3=b_{m_3}$ and therefore, for $C$ given by
\[
\begin{cases}
C^1=\langle a_1,a_2,a_3,\dots\rangle=\langle b_{m_1},b_{m_2},b_{m_3},\dots\rangle,\\
C^2=\langle d_1,d_2,d_3,\dots\rangle,\\
C^3=\langle k_1,k_2,k_3,\dots\rangle,
\end{cases}
\]
we get that $C\subseteq C_1$ as well as $C\cup\{d_1,d_2,\dots\}$ is $\chi$-monochromatic with the color $c$ and consequently, we have proved the theorem in Case 1.

\underline{Case 2}: Suppose that Case 1, doesn't occur. This means that there is $p\in\mathbb{N}$, such that all elements of the following subset $G$ of $C_2$, get their $\chi$-colors from $\{1,\dots,c-1\}$.
\[
G=\big{\{}x_pd_p^*+\cdots+x_{p+t}d_{p+t}^*|t\in\mathbb{N},0\leq x_p\leq k_p-1,\dots,0\leq x_{p+t}\leq k_{p+t}-1\big{\}}.
\]
Now, we pass to the following subsets $D_1,E_1$ of $G$ given by
\[
\begin{cases}
D_1^1=\langle d_p^*,d_p^*,d_p^*,\dots\rangle\\
D_1^2=\langle d_{p+1}^*,d_{p+2}^*,d_{p+3}^*,\dots\rangle,\\
D_1^3=\langle k_{p+1},k_{p+2},k_{p+3},\dots\rangle,
\end{cases}
\begin{cases}
E_1^1=\langle 0,0,0,\dots\rangle\\
E_1^2=\langle d_{p+1}^*,d_{p+2}^*,d_{p+3}^*,\dots\rangle,\\
E_1^3=\langle k_{p+1},k_{p+2},k_{p+3},\dots\rangle.
\end{cases}
\]
For getting $D,E,$ satisfying the conclusion of the theorem, we first define a sequence of triples of integers $\{\langle t_i,m_i,l_i^*\rangle|i\in\mathbb{N}\}$ inductively, as follows. Set $t_1=0$ and let $m_1$ be the least integer with $m_1>p$ and $k_{m_1}\geq W_1$. This is possible as the sequence $k_1,k_2,k_3,\dots$ is unbounded. Now put $l_1^*=d_{m_1}^*$ and let $m_2$ be the least integer with $m_2>m_1+t_1=m_1$, such that $k_{m_2}\geq W_2$ and let $t_2$ be the least integer with
\[
d_{m_2}^*+d_{m_2+1}^*+\cdots+d_{m_2+t_2}^*>(W_1-1)l_1^*.
\]
Then, put $l_2^*=d_{m_2}^*+d_{m_2+1}^*+\cdots+d_{m_2+t_2}^*$. To continue, let $m_3$ be the least integer with $m_3>m_2+t_2$ such that $k_{m_3}\geq W_3$ and let $t_3$ be the least integer with
\[
d_{m_3}^*+d_{m_3+1}^*+\cdots+d_{m_3+t_3}^*>(W_1-1)l_1^*+(W_2-1)l_2^*.
\]
Put $l_3^*=d_{m_3}^*+d_{m_3+1}^*+\cdots+d_{m_3+t_3}^*$ and so on. Clearly this can be continued and we obtain the sequence $\langle l_i^*|i\in\mathbb{N}\rangle$ with $l_1^*>0$ and for $i\geq2$, we have
\[
l_i^*>(W_1-1)l_1^*+\cdots+(W_{i-1}-1)l_{i-1}^*.
\]
Also for $i\in\mathbb{N}$, we have $k_{m_i}\geq W_i$. Now, put $b=d_p^*$ and observe that the construction of $\{\langle t_i,m_i,l_i^*\rangle|i\in\mathbb{N}\}$, implies (by an easy computation) that $D_2\subseteq D_1$ and $E_2\subseteq E_1$, where $D_2,E_2$ are given by
\[
\begin{cases}
D_2^1=\langle b,b,b,\dots\rangle\\
D_2^2=\langle l_1^*,l_2^*,l_3^*,\dots\rangle,\\
D_2^3=\langle k_{m_1},k_{m_2},k_{m_3},\dots\rangle,
\end{cases}
\begin{cases}
E_2^1=\langle 0,0,0,\dots\rangle\\
E_2^2=\langle l_1^*,l_2^*,l_3^*,,\dots\rangle,\\
E_2^3=\langle k_{m_1},k_{m_2},k_{m_3},\dots\rangle.
\end{cases}
\]
Now, for the final step of the proof, note that in the third sequences $D_2^3,E_2^3$, above, we can actually put smaller elements and pass to subsets. Hence, we get the desired $E,D$ as follows
\[
\begin{cases}
D^1=\langle b,b,b,\dots\rangle\\
D^2=\langle l_1^*,l_2^*,l_3^*,\dots\rangle,\\
D^3=\langle W_1,W_2,W_3,\dots\rangle,
\end{cases}
\begin{cases}
E^1=\langle 0,0,0,\dots\rangle\\
E^2=\langle l_1^*,l_2^*,l_3^*,,\dots\rangle,\\
E^3=\langle W_1,W_2,W_3,\dots\rangle,
\end{cases}
\]
which completes the proof of the theorem in Case 2.
\end{proof}

\subsection*{Acknowledgment} The research of the author was in part supported by a grant from IPM (No. 02030403).

\bibliography{reference}
\bibliographystyle{plain}
\end{document}